\documentclass[12pt]{article}
\usepackage{amssymb,amsthm,amsmath,amsfonts,latexsym,tikz,hyperref}
\usepackage[hmargin=1in,vmargin=1in]{geometry}

\newtheorem{thm}{Theorem}%[section]
\newtheorem{prop}[thm]{Proposition}
\newtheorem{cor}[thm]{Corollary}

\newcommand{\fS}{{\mathfrak S}}
\newcommand{\cT}{{\cal T}}
\newcommand{\SSYT}{SSYT}
\newcommand{\cI}{{\cal I}}
\DeclareMathOperator{\des}{des}
\DeclareMathOperator{\Des}{Des}
\DeclareMathOperator{\sh}{sh}
\DeclareMathOperator{\st}{st}

\DeclareMathOperator{\sign}{sign}
\DeclareMathOperator{\Asc}{Asc}

\definecolor{vividviolet}{rgb}{0.62, 0.0, 1.0}

\usepackage{nicefrac}
\usepackage{xfrac}
\usepackage{youngtab} % for Young tableaux
\usepackage{young} % for Young tableaux

\begin{document}
\pagestyle{plain}

\title{Combinatorial Proofs of Identities Involving Symmetric Matrices}
\author{Samantha Dahlberg\\[-5pt]
\small Department of Mathematics, Michigan State University,\\[-5pt]
\small East Lansing, MI 48824-1027, USA, {\tt dahlbe14@msu.edu}}

\date{\today\\[10pt]
	\begin{flushleft}
	\small Key Words: Involutions; Descents; Symmetric matrices; RSK; Standard Young Tableau; Semi-standard Young Tableau
	                                       \\[5pt]
	\end{flushleft}}

\maketitle

\begin{abstract}

Brualdi and Ma found a connection between involutions of  length $n$ with $k$ descents and symmetric $k\times k$ matrices with non-negative integer entries summing to $n$ and having no row or column of zeros. From their main theorem they derived two alternating sums by algebraic means and asked for combinatorial proofs.  In this note we provide such demonstrations making use of the Robinson-Schensted-Knuth correspondence between symmetric matrices and semi-standard Young Tableau. Additionally, we restate the proof of Brualdi and Ma's main result with this perspective which shortens the argument. 

%S: Abstracts are to be stand-alone and not cite references. Also it's important to contrast their algebraic proof with your combinatorial one. }
\end{abstract}

\begin{section}{Introduction}

Let $\fS_n$  be the collection of permutations on $[n]=\{1,2,\dots,n\}$ and $\cI_n=\{\pi\in\fS_n:\pi^2=\text{id}\}$ be the collection of involutions. A permutation $\pi \in \fS_n$ has a {\it descent} at index $i$ if $\pi(i)>\pi(i+1)$. We write $\Des(\pi)=\{i:\pi(i)>\pi(i+1)\}$ for the collection of {\it descents} in $\pi$ and $\Asc(\pi)=[n-1]\setminus \Des(\pi)$ for the set of  {\it ascents}. The {\it number of descents} is $\des(\pi)=|\Des(\pi)|$
and  the generating function for the descent statistic on involutions is 
 $$I_n(t)=\sum_{\iota\in\cI_n}t^{\des(\iota)}=\sum_{k=0}^{n-1}I(n,k)t^k$$
where we denote the set of involutions $\iota \in \cI_n$ with $\des(\iota)=k$ as $\cI(n,k)$ and the number of such involutions $I(n,k)=|\cI(n,k)|$. This generating function has been studied by many  and is known to be symmetric~\cite{S81} and unimodal~\cite{GZ06}~\cite{D07} but is not log-concave~\cite{BBS09}. 
Additionally, Brualdi and Ma~\cite{BM15}, whose work this paper is in response to, studied this function and  found a connection to symmetric matrices.
Let $\cT(n,k)$ be the collection of symmetric $k\times k$ matrices with non-negative integer entries which satisfy the following two conditions. 
\begin{enumerate}
\item  The entries sum up to $n$. 
\item No row or column contains only zero entries. 
\end{enumerate}
Let $T(n,k)=|\cT(n,k)|$. 
%Since the only matrices in this note are square we will define the {\it dimension} of a $k\times k$ matrix $X$ to be $\dim(X)=k$.
The main result of Brualdi and Ma is below.

\begin{thm}[Brualdi and Ma~\cite{BM15} Theorem 1] For $n\geq 1$ we have
$$I_n(t)=\sum_{k=1}^nT(n,k)t^{k-1}(1-t)^{n-k}$$ 
which is equivalent to 
\begin{equation}\sum_{k=0}^{n-1}I(n,k)t^{k+1}(1+t)^{n-k-1}=\sum_{k=1}^nT(n,k)t^k.
\label{eq:BM}
\end{equation}
\label{MainThm} \end{thm}

Cho in~\cite{C08} found that $I_n(t)$ could be expressed using Kotska numbers, whose definition we build next. 
An integer partition $\lambda=(\lambda_1,\lambda_2,\dots,\lambda_{l})$ of $n$, $\lambda\vdash n$, is a weakly decreasing sequence of positive integers which sum to $n$. 
%,  $\sum \lambda_i=n$. 
The {\it Ferrer's diagram} of an integer partition $\lambda$ is a depiction of $l$ left-aligned rows of boxes where  row $i$ has $\lambda_i$ boxes and row one is at the top. If we fill each box with  positive integers such that each row is weakly increasing and each column is strictly increasing then we call the diagram a {\it semi-standard Young Tableau}, SSYT. The {\it shape} of a SSYT, $\sh(T)$, is the underlying integer partition or Ferrer's diagram. The {\it content}, $C(T)$, of a SSYT refers to the collection of fillings usually written as an {\it integer composition},  a sequence of positive integers $C=(a_1,a_2,\dots,a_k)$ which sum to $n$ with {\it length} $\ell(C)=k$. If the content of a SSYT is $C(T)=(a_1,a_2,\dots,a_k)$ then  $T$ is filled with $a_1$ ones, $a_2$ twos and $a_i$ $i$'s. In general the content of a SSYT is actually a {\it weak integer composition} which allows for zero entries, that $a_i=0$, since it is not a requirement for a SSYT to have at least one filling of $1,2,\dots, n$ for some $n$. Given a SSYT we can {\it standardize} $T$, $\st(T)$, by replacing all the $i$th smallest fillings with $i$. If $\st(T)=T$ we will call this SSYT {\it proper} which we specifically  focus on  in this paper. We call a SSYT a {\it standard Young Tableau}, SYT, if $T$ is filled with $1,2,\dots, n$. The {\it Kotska numbers}, $K_{\lambda,C}$, count the number of proper SSYT of shape $\lambda$ and content $C$. 
Cho~\cite{C08} found that 
$$I_n(t)=\sum_{k = 1}^{n}\sum_{\lambda\vdash n, \ell(C)=k}K_{\lambda,C}t^{k-1}(1-t)^{n-k}$$
illustrating a connection between involutions and SSYT.

One corollary which Brualdi and Ma concluded using this equation was
\begin{equation}
T(n,k)=\sum_{\lambda\vdash n, \ell(C)=k}K_{\lambda,C}
\label{eq:Kotska}
\end{equation}
which was originally found by Cho in~\cite{C08} which does suggest that  there is a bijection from symmetric matrices and SSYT. This is actually the well-known Robinson-Schensted-Knuth, RSK, correspondence. This correspondence is often introduced as a bijection between permutations $\pi$ and  pairs of SYT   $(P,Q)$ of the same shape. A comprehensive introduction of  RSK in this light can be found in~\cite{S01} where the original presentations were done by Robinson~\cite{R38} and Schensted~\cite{S61}. This correspondence has the nice property that the inverse $\pi^{-1}$ corresponds to the reverse pair $(Q,P)$. This in turn implies that an involution corresponds to a singe SYT. Knuth~\cite{K70} generalized this correspondence between matrices $M$ and pairs of SSYT $(P,Q)$ of the same shape. This generalizes the previous correspondence if you consider a permutation as its permutation matrix. This generalization has the nice property that the  transpose of the matrix,
% $M^{\perp}$, 
 $M^{T}$, 
corresponds to the reversed pair $(Q,P)$ which implies the RSK correspondence ties together a symmetric matrix and a single SSYT. Particularly, the symmetric matrices dealt with by Brualdi and Ma pair with proper SSYT as the non-zero row  and column condition gives this extra restriction on the fillings. Brualdi and Ma had two corollaries which they proved algebraically from Theorem~\ref{eq:BM}. 

\begin{cor}[Brualdi and Ma~\cite{BM15} Corollary 3]
We have for $n\geq 1$ that $$\sum_{k=1}^n (-1)^kT(n,k)=(-1)^n.$$
\label{cor1}
\end{cor}

Brualdi and Ma also showed that equation~\eqref{eq:BM} still holds when restricting to symmetric matrices in $\cT(n,k)$ with zeros along its main diagonal and involutions without fixed points, meaning they do not have any one-cycles. Let  $W(n,k)$ count the number of matrices in $\cT(n,k)$ such that the main diagonal is all zeros.  Brualdi and Ma's second corollary which we consider in this paper is the following. 
\begin{cor}[Brualdi and Ma~\cite{BM15} Corollary 5]We have for even $n$ that $$\sum_{k=2}^n(-1)^kW(n,k)=1.$$
\label{cor2}
\end{cor}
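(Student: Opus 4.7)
The plan is to restrict the sign-reversing involution constructed for Theorem~\ref{thm1} to the signed subset
$$S' = \bigcup_{k=2}^n \{X \in \cT(n,k) : x_{i,i}=0 \text{ for all } i\},$$
equipped with the same sign $(-1)^{\dim(X)}$. Once this restriction is shown to be well defined, the sum $\sum_{X \in S'} (-1)^{\dim(X)}$ computes both $\sum_{k=2}^n (-1)^k W(n,k)$ and the signed count of the surviving fixed points, and the identity will follow by identifying which fixed points lie in $S'$.

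The crucial preliminary observation is that every $X \in S'$ has $x_{1,1}=0$, so the index $m$ (largest column with a nonzero entry in row $1$) cannot equal $1$. Hence no matrix in $S'$ falls into Case~3 or Case~4 of the original construction; every $X \in S'$ is handled by Case~1 or Case~2. This is convenient, because Cases~3 and~4 manipulate the $(1,1)$ entry and would be awkward to restrict.

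What remains is to verify that the Case~1 and Case~2 pairings preserve zero-diagonality in both directions. In Case~1 the map deletes the $m$th row and column, so the diagonal of $Y$ equals the diagonal of $X$ with the entry $x_{m,m}$ removed; since $\Pi_1$ matrices in this case have $x$ as the only nonzero entry in column~$m$ and $m \neq 1$, we have $x_{m,m}=0$, and zero-diagonality transfers in both directions. In Case~2 the map deletes the first and $m$th rows and columns and relocates $x$ to the off-diagonal position $(1,m-1)$; tracking the definitions shows that every diagonal entry created by the forward and inverse maps is zero, so zero-diagonality is again preserved. This diagonal bookkeeping is the main technical step but is routine.

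Finally we collect the fixed points that survive the restriction. The Case~4 fixed points $F_1$ and $F_n$ for odd $n\geq 3$ each have $x_{1,1}=1$ and so are excluded from $S'$. The only fixed point remaining is the Case~2 matrix $F_n$, which by definition exists precisely when $n$ is even, lies in $S'$, and has dimension $2$. Therefore
$$\sum_{k=2}^n (-1)^k W(n,k) \;=\; \sum_{X \in S'} (-1)^{\dim(X)} \;=\; (-1)^{\dim(F_n)} \;=\; 1,$$
as desired.
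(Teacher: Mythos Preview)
Your proof is correct and follows essentially the same approach as the paper: you restrict the sign-reversing involution of Theorem~\ref{thm1} to the zero-diagonal matrices, observe that Cases~3 and~4 never arise (since $x_{1,1}=0$ forces $m\neq 1$), verify that Cases~1 and~2 preserve zero-diagonality, and conclude that the unique surviving fixed point is the $2\times 2$ matrix $F_n$. Your write-up supplies a bit more detail on the diagonal bookkeeping than the paper does, but the argument is the same.
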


They ask in their paper for combinatorial proofs to these corollaries and here we response to that. Our combinatorial proofs use a sign-reversing involution and rely on the   RSK correspondence which we also use to  re-present Brualdi and Ma's bijective proof of Theorem~\ref{MainThm} as this view shortens the argument. 
Since we heavily use the correspondence between symmetric matrices and SSYT  we describe in the next section a quick method to determine corresponding objects by Burge~\cite{B74} which is restated here. In the last section we present the combinatorial proofs of the corollaries  and end the section with a re-presentation of Brualdi and Ma's proof of their main theorem restated in terms of SSYT rather than symmetric matrices. 

\end{section}
\begin{section}{RSK between symmetric matrices and  SSYT}

Knuth in his  paper~\cite{K70}  describes the correspondence between any $n\times m$ matrix with non-negative integer entries to a pair of SSYT $(P,Q)$ of the same shape where the sequence of column sums is the content of $P$ and the sequence of row sums is the content of $Q$. We give an example in Figure~\ref{fig:RSK}. He particularly shows that if $M$ corresponds to the pair of tableau $(P,Q)$ then the transpose $M^T$ corresponds to the pair of tableau $(Q,P)$, so a symmetric matrix $M$ corresponds to a single  SSYT $T$. This correspondence  has the property that if a symmetric matrix $M$ corresponds to $T$ and $M$ has $r_i$ as the sum of entries in the  $i$th row then the content of $T$ is  $({r_1},r_2,\dots {r_k})$ which can be seen in the example in Figure~\ref{fig:MtoSSYT}. Note that this means if $M$ has no  row or columns of all zeros then $M$ corresponds to a proper SSYT. 

\begin{figure}[h]
\begin{center}
\begin{tikzpicture}[scale = .5]
\begin{scope}[shift={(0,0)}]
%\draw (-3.3,0) node {$M=$};
\draw (0,0) node {$\left[
\begin{array}{ccc}
0&0&3\\
2&1&1\\
0&1&0
\end{array}\right]$};
\draw (3,0) node {$\leftrightarrow$};
\end{scope}
\begin{scope}[shift={(6.5,0)}]
%\draw (-3.3,0) node {$T=$};
\draw (0,0) node { \young(1122,3333) };
\draw (2.5,-1) node { ,};
\draw (5,0) node { \young(1112,2223) };
\draw (-2,2) arc (150:210:3.5) ;
\draw (7,-1.5) arc (-30:30:3.5) ;
 \end{scope}
 \end{tikzpicture}
%\begin{tikzpicture}[scale = .5]
%\draw (-2,3) node {$M=$};
%\draw (0,-0)--(-.5,0)--(-.5,6.5)--(0,6.5);
%\draw (5.5,0)--(6,0)--(6,6.5)--(5.5,6.5);
%\draw (0,.2)--(1.5,.2)--(1.5,1)--(4.5,1)--(4.5,4.2)--(5.6,4.2)--(5.6,6)--(0,6)--(0,.2);
%\draw (2,.5) node {$x$};
%\draw (2.9,.5) node {$0$};
%\draw (4.1,.5) node {$\cdots$};
%\draw (5.2,.5) node {$0$};
%\draw (5.2,2) node {$\vdots$};
%\draw (5.2,2.9) node {$0$};
%\draw (5.2,3.7) node {$x$};
% \end{tikzpicture}
%\begin{tikzpicture}[scale = .5]
%\draw (-2,3) node {$M'=$};
%\draw (0,-0)--(-.5,0)--(-.5,6.5)--(0,6.5);
%\draw (5.5,0)--(6,0)--(6,6.5)--(5.5,6.5);
%\draw (0,.2)--(1.5,.2)--(1.5,1)--(4.5,1)--(4.5,4.2)--(5.6,4.2)--(5.6,6)--(0,6)--(0,.2);
%\draw (2,.5) node {$x-1$};
%\draw (2.9,.5) node {$0$};
%\draw (4.1,.5) node {$\cdots$};
%\draw (5.2,.5) node {$0$};
%\draw (5.2,2) node {$\vdots$};
%\draw (5.2,2.9) node {$0$};
%\draw (5.2,3.7) node {$x-1$};
% \end{tikzpicture}
\end{center}
\caption{Matrix $M$ on the left corresponds by RSK to the pair of SSYT $(P,Q)$ on the right with $C(P)=(2,2,4)$ and $C(Q)=(3,4,1)$. }
\label{fig:RSK}
\end{figure}

Since we rely on this correspondence between symmetric matrices and SSYT we present in this section a quick algorithm originally introduced by Burge in~\cite{B74} which bypasses the need to find the word associated to the matrix and the full bumping algorithm. This algorithm is the generalization of Beissinger~\cite{B87} who describes the further tie between involutions and their associated SYT. Her algorithm  considers the cycle notation of an involution and describes the associated SYT inductively  as you include more two-cycles.  Specifically, given an involution $\iota'\in \cI_{n-2}$ we can add another two-cycle $(i,n)$ with $1\leq i < n$ by doing the following in one-line notation. First increase all numbers greater than or equal to $i$ by one and then have this permutation written in the indices $[n]\setminus \{i,n\}$. Finally we get our new involution by placing $i$ at index $n$ and  $n$  at index $i$.  Beissinger notates this new involution $\iota=\iota'+(i,n)$. For example $45312+(3,6)=5674123$. Her  algorithm determines the SYT of $\iota$ from the SYT $T'$ of $\iota'$ with the following three steps. 
\begin{enumerate}
\item Increase all fillings greater than or equal to $i$ in $T'$ by one. 
\item Insert $i$ into $T'$ as the Robinson-Schensted-Knuth bumping algorithm dictates. The bumping path will end on some row $r$. 
\item Insert $n$ at the end of row $r+1$. 
\end{enumerate}
With these three steps we arrive at the SYT $T$ for $\iota=\iota'+(i,n)$.

\begin{figure}[h]
\begin{center}
\begin{tikzpicture}[scale = .5]
\begin{scope}[shift={(0,0)}]
\draw (-3.3,0) node {$M=$};
\draw (0,0) node {$\left[
\begin{array}{ccc}
0&1&2\\
1&3&0\\
2&0&0
\end{array}\right]$};
\draw (3,0) node {$\leftrightarrow$};
\end{scope}
\begin{scope}[shift={(8,0)}]
\draw (-3.3,0) node {$T=$};
\draw (0,0) node { \young(1112,222,33) };
 \end{scope}
 \begin{scope}[shift={(16,0)}]
\draw (-3.3,0) node {$M'=$};
\draw (0,0) node {$\left[
\begin{array}{ccc}
0&1&1\\
1&3&0\\
1&0&0
\end{array}\right]$};
\draw (3,0) node {$\leftrightarrow$};
\end{scope}
\begin{scope}[shift={(24,0)}]
\draw (-3.3,0) node {$T'=$};
\draw (0,0) node { \young(1122,22,3) };
 \end{scope}
 \end{tikzpicture}
%\begin{tikzpicture}[scale = .5]
%\draw (-2,3) node {$M=$};
%\draw (0,-0)--(-.5,0)--(-.5,6.5)--(0,6.5);
%\draw (5.5,0)--(6,0)--(6,6.5)--(5.5,6.5);
%\draw (0,.2)--(1.5,.2)--(1.5,1)--(4.5,1)--(4.5,4.2)--(5.6,4.2)--(5.6,6)--(0,6)--(0,.2);
%\draw (2,.5) node {$x$};
%\draw (2.9,.5) node {$0$};
%\draw (4.1,.5) node {$\cdots$};
%\draw (5.2,.5) node {$0$};
%\draw (5.2,2) node {$\vdots$};
%\draw (5.2,2.9) node {$0$};
%\draw (5.2,3.7) node {$x$};
% \end{tikzpicture}
%\begin{tikzpicture}[scale = .5]
%\draw (-2,3) node {$M'=$};
%\draw (0,-0)--(-.5,0)--(-.5,6.5)--(0,6.5);
%\draw (5.5,0)--(6,0)--(6,6.5)--(5.5,6.5);
%\draw (0,.2)--(1.5,.2)--(1.5,1)--(4.5,1)--(4.5,4.2)--(5.6,4.2)--(5.6,6)--(0,6)--(0,.2);
%\draw (2,.5) node {$x-1$};
%\draw (2.9,.5) node {$0$};
%\draw (4.1,.5) node {$\cdots$};
%\draw (5.2,.5) node {$0$};
%\draw (5.2,2) node {$\vdots$};
%\draw (5.2,2.9) node {$0$};
%\draw (5.2,3.7) node {$x-1$};
% \end{tikzpicture}
\end{center}
\caption{ The pairs $M$ and $T$  correspond by RSK as well as $M'$ and $T'$.}
\label{fig:MtoSSYT}
\end{figure}

The short-cut RSK algorithm for matrices in $\cT(n,k)$ and their SSYT by Burge~\cite{B74} is very similar. Though we aren't adding on two-cycles anymore like we do for involutions the set-up is similar to Beissinger's. Given a matrix $M\in \cT(n,k)$ we find the largest column $a$ such that the $k$th row is all zeros for all larger columns, $M_{k,j}=0$ for all $j>a$, which we illustrate in Figure~\ref{fig:MtoSSYT}. Consider the matrix $M'$ which is matrix $M$ with the entries $M_{k,a}$ and $M_{a,k}$ decreased by one. Let $T'$  be the  SSYT associated to $M'$.  There are some cases where this set up or the following steps do not make sense, and we will mention these cases after we describe the general algorithm. We get the  SSYT $T$ associated to $M$ in the following two steps.
\begin{enumerate}
\item Insert $a$ into $T'$ as the RSK bumping algorithm dictates. The bumping path will end on some row $r$. 
\item Insert $k$ at the end of row $r+1$. 
\end{enumerate}
The first deviant case is when $a = k$. In this case we decrease this single entry in $M$ by one to get $M'$ which is associated to some SSYT $T'$. We get $T$ by placing a $k$ at the end of the first row in $T'$. The next and last deviant case is when reducing the entries in $M_{k,a}$ and $M_{a,k}$ by one makes a row and column of all zeros. In this case $M'$ will be the matrix we get by removing the $a$th row and column, and our algorithm then will include step 1 from Beissinger's  algorithm in that we will first increase all fillings greater than or equal to $a$ in $T'$ by one. We can see that this does generalize  Beissinger's algorithm on involutions when you consider involutions in their matrix form. 

Since we will rely on this correspondence in the rest of the paper we formally state Knuth's result here. 
\begin{prop}[Knuth~\cite{K70} Theorem 4]
There is a correspondence between  matrices in SSYT in ${\cal T}(n,k)$ to SSYT in the set $\{\text{proper }\SSYT \text{ }T:\sh(T)\vdash n \text{ and } \ell(C(T))=k\}$. 
\end{prop}

\end{section}

\begin{section}{Combinatorial Proofs}

In this section we prove the two identities in Corollaries~\ref{cor1} and~\ref{cor2} due to Brualdi and Ma. In their paper they presented algebraic proofs and asked for combinatorial ones which we provide here. At the end of the section we re-present their proof for Theorem~\ref{MainThm} but instead use the correspondence to SSYT which shorten the proof. 

We will prove the identities in Corollaries~\ref{cor1} and~\ref{cor2}  by defining a sign-reversing involution. In order to define a sign-reversing involution we will first need to define a  {\it signed set}  $S$ which is a set  such that each $s\in S$ has an associated  {\it sign}, $\sign(s)$, of $+1$ or $-1$. The elements assigned the value $+1$ are called the positive elements and the elements assigned $-1$ are the negative elements. A {\it sign-reversing involution} is a map $f:S\rightarrow S$ which is an involution,  $f\circ f = id$. The map $f$ also has to be sign reversing in that if $f(s)\neq s$ for $s\in S$  the sign associated to $s$ is opposite of the sign associated to $f(s)$. The consequence of such a map is a pairing between many elements in $S$ such that each pair has a positive element and a negative element so that when we add the  signs of the pair together we get zero. Not all elements $s\in S$ will be part of a pairing. This happens when $f(s)=s$ and we call these elements {\it fixed points}. Regarding summations this involution gives us
$$\sum_{s\in S}\sign(s)=\sum_{\text{fixed points $s$}}\sign(s)$$
which simplifies the summation. 

In our proof we will not directly define a sign-reversing involution on symmetric matrices. Instead we will use the correspondence between symmetric matrices and SSYT meaning we will define a sign-reversing involution on SSYT. We will particularly be relying on equation~\eqref{eq:Kotska} and will prove the  identity
$$\sum_{k=1}^n \sum_{\lambda \vdash n, \ell(C)=k}(-1)^kK_{\lambda,C}=(-1)^n$$
which is equivalent to Corollary~\ref{cor1}. 

%\begin{thm}[Brualdi and Ma~\cite{BM15} \tvi{say thm number}]
%We have for $n\geq 1$ that $$\sum_{k=1}^n (-1)^kT(n,k)=(-1)^n.$$
%\label{thm1}
%\end{thm}

\begin{proof}[Proof of Corollary~\ref{cor1}]
Let our signed set $S$  contain all proper SSYT with $\sh(T)\vdash n$. The sign of each $T\in S$ will be $\sign(T)=(-1)^{\ell(C(T))}$. Now we have 
$$\sum_{k=1}^n \sum_{\lambda \vdash n, \ell(C)=k}(-1)^kK_{\lambda,C}=\sum_{T\in S}\sign(T).$$
Next we define our involution. Given a $T\in S$ look at its first column. There will be some largest $a$ in this column such that all $i\in [a-1]$ appear above $a$ in column one and the rest of the tableau does not contain any $i\in [a-1]$. Fixing an $a$ we can split all such tableau  into two cases. 
\begin{enumerate}
\item The rest of the tableau contains another $a$. 
\item The rest of the tableau does not contain another $a$. 
\end{enumerate}
Now we will define the involution. Let $P$ indicate all the cells and fillings  outside the identified $1,2,\dots,a$ in the first column. If you are in the first case then the rest of the tableau, $P$, contains another $a$. We will increase all fillings in $P$ by one which gives us $P+1$. Note that because there was this additional $a$ in $P$ we still have a proper SSYT and $P+1$  also doesn't contain any $i\in [a-1]$. Further this resulting SSYT is now part of case 2 since $P+1$   doesn't contain an $a$.  If we  instead are in the second case then $P$ doesn't contains an $a$. Note that  the cell just below $a$ in the first column has to be  strictly larger than $a+1$ otherwise we contradict our choice of a largest $a$. We will decrease all fillings in cells of $P$ by one which gives us $P-1$ and by our note assures us we have a proper SSYT. Since  $P-1$ still doesn't have any $i\in [a-1]$ and by the fact that there must have been an $a+1$ in $P$ we then have an $a$ in $P-1$ which brings us  back to case 1. See Figure~\ref{fig:signinvo} for an example.

\begin{figure}[h]
\begin{center}
\begin{tikzpicture}[scale = .5]
\begin{scope}[shift={(0,0)}]
\draw[-] (-.5,2)--(3,2)--(3,0)--(2,0)--(2,-2)--(1,-2)--(1,-4)--(-.5,-4)--(-.5,2);
\draw (0,0) node { \young(1,2,\vdots,a) };
\draw (1.5,1) node {$P$};
\draw (4,0) node {$\leftrightarrow$};
\end{scope}
\begin{scope}[shift={(6,0)}]
\draw[-] (-.5,2)--(3,2)--(3,0)--(2,0)--(2,-2)--(1,-2)--(1,-4)--(-.5,-4)--(-.5,2);
\draw (0,0) node { \young(1,2,\vdots,a) };
\draw (1.7,1) node {$P-1$};
\end{scope}
\begin{scope}[shift={(15,0)}]
\draw (0,0) node { \young(144,25,36,5) };
\draw (2,0) node {$\leftrightarrow$};
\draw (4.2,0) node { \young(133,24,35,4) };
\end{scope}
 \end{tikzpicture}
\end{center}
\caption{On the left the fillings of $P$ do not contain an $a$ and $P-1$ does contain an $a$.  On the right we have a specific example of the sign-reversing involution with $a = 3$. }
\label{fig:signinvo}
\end{figure}

This is certainly an involution on all proper SSYT which have $P\neq \emptyset$ and is certainly sign reversing since we increase or decrease the length of the content by one.  If $P= \emptyset$ then  $T$ must be a single column filled with $1,2,\dots ,n$ which we will call $T_n$. We will let $T_n$ be a fixed point. It follows that 
$$\sum_{T\in S}\sign(T)=\sign(T_n)=(-1)^n$$
which completes the proof.
\end{proof}

Brualdi and Ma also concluded Corollary~\ref{cor2}. Recall that  $W(n,k)$ counts the number of matrices in $\cT(n,k)$ such that the main diagonal is all zeros. To prove Corollary~\ref{cor2} all we need to do is to assure that our sign-reversing involution in the proof of Corollary~\ref{cor1} does restrict to SSYT associated to matrices with zeros along its diagonal. We  get this quickly by the following result of Knuth.

\begin{prop}[Knuth~\cite{K70} Theorem 4]
Given a SSYT $T$ and its associated symmetric matrix $M$ the number of odd columns in $T$ equals the trace of $M$. 
\label{Knuth2}
\end{prop}

Note that our sign-reversing involution does not change the shape of the tableau. This means that if $T$ doesn't have an odd length column then $T$ is mapped to another tableau with the same shape which then also doesn't have an odd length column. Thus, our sign-reversing involution does indeed restrict  to SSYT associated to symmetric matrices with zeros along the main diagonal. 

%\begin{cor}[\cite{BM15}]We have for even $n$ that $$\sum_{k=2}^n(-1)^kW(n,k)=1.$$
%\end{cor}
\begin{proof}[Proof of Corollary~\ref{cor2}]By Proposition~\ref{Knuth2} our sign-reversing involution in the proof of Corollary~\ref{cor1} restricts to the collection of SSYT associated to symmetric matrices counted by $\sum_kW(n,k)$. The only fixed point here is again the tableau with a single column filled with  $1,2,\dots, n$ which has a positive sign since $n$ is even.  
\end{proof}

Finally, we will re-prove Brualdi's and Ma's Theorem~\ref{MainThm} by proving equation~\eqref{eq:BM}. The heart of Brualdi's and Ma's proof was  showing the identity
$$T(n,i)=\sum_{k = 0}^{n-i}I(n,k)\binom{n-1-k}{n-i}.$$
They proved this by defining a bijection from matrices in $\cT(n,i)$ to an involution $\iota$ 
%with $k$ descents 
paired with a set  $A\subseteq \Asc(\iota)$ of size $n-i$. We present the proof here in a new light using the correspondence between symmetric matrices and SSYT. 
We will describe a bijection from proper SSYT of shape $\lambda\vdash n$ with content length $i$ to a pair $(\iota,A)$ with $\iota \in \cI_n$ and $A\subseteq \Asc(\iota)$ of size $n-i$. Before we start the proof we need one fact about SYT. You can see this fact demonstrated in Figure~\ref{fig:BMproof}.

\bigskip

\noindent {\it Fact: }Given an involution $\iota$ and its associated SYT $P$ then $i\in \Asc(\iota)$ if and only if $i+1$ appears in a column to the left of $i$ in $P$ and either the same row or in a higher row.

\begin{proof}[Proof of Theorem~\ref{MainThm}]
Say we start with a pair $(\iota,A)$ with $\iota\in \cI_n$ and  $A\subseteq \Asc(\iota)$ of size $n-i$. We will map this pair to a single proper SSYT $T$ with $\sh(T)\vdash n$ and $\ell(C(T))=i$. The involution $\iota$ is associated to its permutation matrix which is symmetric and by  RSK  is further associated to a SYT $P$. We will construct a proper SSYT $T$ with $\sh(T)\vdash n$ and $\ell(C(T))=i$ from $P$ and $A$ as follows. For all $a\in A$ find the two cells in $P$ which are filled with $a$ and $a+1$. Change the cell filled with $a+1$ so that it matches the filling of the other cell. Call this  $P'$. Note that this means if $a,a+1,\dots ,b\in A$ then all the cells filled with $a,a+1,\dots, b+1$ will have the same filling in $P'$. Since $|A|=n-i$ we know that $P'$ is filled with $i$ different letters. Note that we only matched fillings for ascents which means $P'$ is weakly increasing along rows and is still strictly increasing along columns using the fact above. Let $T$ be the standardization of $P'$,  $T=\text{st}(P')$, which is a proper SSYT with $\ell(C(T))=i$. See Figure~\ref{fig:BMproof} for an example. This assures us that the bijection we are describing is  well defined. 

\begin{figure}[h]
\begin{center}
\begin{tikzpicture}[scale = .5]
\draw (0,0) node { \young(11124,335,5) };
\draw (3.5,0) node {$\longleftrightarrow$};
%\draw (18,-2) arc (-30:30:5) ;
%\draw (7,3) arc (150:210:5) ;
\draw (9,0) node {$\left( 125836947,\{1,2,5,8\}\right)$ };
\begin{scope}[shift={(18,0)}]
\draw (0,0) node {\young(12347,569,8)};
\draw (3.5,.5) node {$\overset{\text{RSK}}{\longleftrightarrow}$};
\draw (6.5,0) node {$ 125836947$};
\end{scope}
 \end{tikzpicture}
\end{center}
\caption{Above we have the pair $(\iota,A)$ which is associated to the SSYT $T$ on the left with  $\ell(C(T))=5$ and  $|A|=9-5$. }
\label{fig:BMproof}
\end{figure}

To show that this is actually a bijection we will show that the map we just described has an inverse. Consider a proper SSYT $T$ with $\sh(T)\vdash n$ and $\ell(C(T))=i$. We will construct a SYT $P$ by labeling all the $n$ cells in $T$ with $1,2,\dots, n$ and then replace each cell with the label. We will also keep track of some of these labels since they will form the set $A$. If $T$ has $m_1$ $1$s then we label these cells from left to right with $1,2,\dots, m_1$. This makes sense since no column will contain more than one $1$. Let $L_1=\{1,2,\dots, m_1-1\}$  be the set of all these labels but the largest one, $m_1$. If there are $m_2$ $2$s we will label the cells with $2$s left to right with  $m_1+1,m_1+2,\dots,m_1+m_2$. Let $L_2$ contain all these labels but the largest one, $m_1+m_2$. If we have $m_i$ $i$'s we label the cells with $i$ in $T$ left to right with the integers starting from $m_1+\cdots+m_{i-1}+1$ to $m_1+\cdots+m_{i}$. Let $L_i$ contain all these labels but the largest one. 
First, note that $P$ is a SYT because  no two $i$s were in the same column in $T$ which results in $P$ being strictly increasing along rows and columns. By RSK $P$ is associated to an involution $\iota$.
Second, note that by the above fact and the position of the cells containing $a\in L_i$ and $a+1$ in $P$ that $a\in \Asc(\iota)$ and $A=\cup_i L_i\subseteq \Asc(\iota)$.  Because $T$ was a proper SSYT with content length $i$ we can conclude that $|A|=n-i$. This completes the inverse map from $T$ to the pair $(\iota,A)$. 

Lastly, we only need to argue that these two maps are inverses. This is because in the SYT $P$ the subset $A$ of ascents tell of which cells in $T$ will have the matching fillings. In the SSYT $T$ the cells with matching fillings tell us which  ascents of $\iota$ should be in the set $A$. 
\end{proof}

\end{section}

%\nocite{*}
\bibliographystyle{alpha}
\bibliography{dahlref}{}

\end{document}